\newcommand{\Hmm}[1]{\leavevmode{\marginpar{\tiny%
$\hbox to 0mm{\hspace*{-0.5mm}$\leftarrow$\hss}%
\vcenter{\vrule depth 0.1mm height 0.1mm width \the\marginparwidth}%
\hbox to
0mm{\hss$\rightarrow$\hspace*{-0.5mm}}$\\\relax\raggedright #1}}}
\newtheorem{thm}{Theorem}[section]
\newtheorem{lemma}[thm]{Lemma}
\theoremstyle{definition}
\newtheorem{eg}[thm]{Example}
\newtheorem*{Remark}{Remark}
\newcommand{\R}{{\mathbb R}}
\newcommand{\al}{{\alpha}}
\newcommand{\de}{{\delta}}
\newcommand{\Om}{{\Omega}}
\newcommand{\si}{{\sigma}}
\newcommand{\lm}{{\lambda}}
\newcommand{\as}[1]{\left\langle #1\right\rangle}
\newcommand{\aV}[1]{\left\Vert #1\right\Vert}
\newcommand{\ov}[1]{\overline{ #1}}
\newcommand{\ow}[1]{\widetilde{ #1}}
\begin{document}
\title[Cheeger inequalities for unbounded graph Laplacians]
{Cheeger inequalities for unbounded graph Laplacians}

\author[F. Bauer]{Frank Bauer}
\address{Frank Bauer, Department of Mathematics, Harvard University, Cambridge, MA 02138, USA}
%\email{Frank.Bauer@mis.mpg.de}

\author[M. Keller]{Matthias Keller}
\address{Matthias Keller, Einstein Institute of Mathematics, The Hebrew University of Jerusalem,
Jerusalem 91904, Israel}% \email{m.keller@uni-jena.de}

\author[R. Wojciechowski]{Rados{\l}aw K. Wojciechowski}
\address{Rados{\l}aw K. Wojciechowski, York College of the City University of New York \\
Jamaica, NY 11451 \\ USA  }
%\email{rwojciechowski@gc.cuny.edu}

\date{\today}

%\tableofcontents

%\maketitle
% \frenchspacing
%%%%%%%%%%%%%%%%%%%%%%%%%%%%%%%%%%%%%%%%%%%%%%%%%%%%%%%%%%%%
% ABSTRACT
%%%%%%%%%%%%%%%%%%%%%%%%%%%%%%%%%%%%%%%%%%%%%%%%%%%%%%%%%%%%

\begin{abstract} \noindent We use the concept of intrinsic metrics
to give a new definition for an isoperimetric constant of a graph.
We use this novel isoperimetric constant to prove a Cheeger-type estimate for the bottom of the spectrum which is nontrivial even if the
vertex degrees are unbounded.%, thereby solving a longstanding open problem.
\end{abstract}
\maketitle

 \section{Introduction}

In 1984 Dodziuk \cite{Do}  proved a lower bound on the spectrum of the Laplacian on
infinite graphs in terms of an isoperimetric constant. Dodziuk's bound is an analogue
of Cheeger's inequality for manifolds \cite{Ch} except for the fact that Dodziuk's estimate also contains an upper bound for the vertex degrees in the denominator. In a later paper \cite{DK} Dodziuk and Kendall expressed that it  would be desirable to have an estimate without the rather unnatural vertex degree bound. They overcame this problem in \cite{DK} by considering the  normalized Laplace operator, which is always a bounded operator, instead. However, the original problem of finding a lower bound on the spectrum of unbounded graph Laplace operators that only depends on an isoperimetric constant remained open until today.

In this paper, we solve this problem by using the concept of intrinsic metrics. More precisely, for a given weighted Laplacian, we use an intrinsic metric to redefine the boundary measure of a set. This leads to a modified definition of the isoperimetric constant for which we obtain a lower bound on the spectrum that solely depends on the constant. These estimates hold true for all weighted Laplacians (including bounded and unbounded Laplace operators). The strategy of proof is not surprising as it does not differ much from the one of \cite{Do,DK}. However, the main contribution of this note is to provide the right definition of an isoperimetric constant to solve the open problem mentioned above.

To this day, there is a vital interest in estimates of isoperimetric constants and in Cheeger-type inequalities.  For example, rather classical estimates for isoperimetric constants in terms of the vertex degree can be found in \cite{AM,DKa,M,M2}, and for relations to random walks, see
\cite{G,Woe2}.  While, for regular planar tessellations, isoperimetric constants can be computed explicitly   \cite{HJL, HS}, there are curvature estimates for arbitrary planar tessellations \cite{H,K2,KP,W}. For Cheeger inequalities on simplicial complexes, there is recent work found in \cite{PRT} and, for general weighted graphs,
see \cite{D2,KL2}.  Moreover, Cheeger estimates for the bottom of the essential spectrum and criteria for discreteness of spectrum are given in \cite{F,K, Woj1,Woj2}.  Upper bounds for the top of the (essential) spectrum and another criterium for the concentration of the essential spectrum in terms of the dual Cheeger constant are given in \cite{BHJ}. Finally, let us mention works
connecting discrete and continuous Cheeger estimates \cite{A-CPP, CGY, Ka, MMT}.

The paper is structured as follows. The set up is introduced in
the next section. The Cheeger inequalities are presented and
proven in Section~\ref{main}. Moreover, upper bounds are discussed.
A technique to incorporate
non-negative potentials into the estimate is discussed in
Section~\ref{potentials}.  Section~\ref{volume} is
dedicated to relating the exponential volume growth of a graph to
the isoperimetric constant via upper bounds while lower bounds on the isoperimetric
constant in the flavor of  curvature are presented in Section~\ref{lowerbound}. These lower bounds allow us to give examples where our estimate yields better results than all estimates known before.  %Finally in Section~\ref{dual} we give upper bounds for the spectral radius by the dual Cheeger constant in the spirit of \cite{BHJ}.

 %%%%%%%%%%%%% SET UP%%%%%%%%%%%%%%%%%%%%%%%

\section{The set up}
\subsection{Graphs}
Let $X$ be a countably infinite set equipped with the discrete
topology. A function $m:X\to(0,\infty)$ gives a Radon measure on
$X$ of full support via $m(A)=\sum_{x\in A}m(x)$ for $A \subseteq X$, so that $(X,m)$
becomes a discrete measure space.

A graph over $(X,m)$ is a symmetric function $b:X\times
X\to[0,\infty)$ with zero diagonal that satisfies
$$
\sum_{y\in  X}b(x, y)<\infty \quad \textup{ for  } x\in X.$$ We
can think of $x$ and $y $ as neighbors, i.e, being connected by an
edge, if $b(x,y)>0$ and we write $x\sim y$. In this case, $b(x,y)$
is the strength of the bond interaction between $x$ and $y$. For convenience we assume that there are no isolated vertices, i.e., every vertex has a neighbor. We
call $b$ \emph{locally finite} if each vertex has only finitely many neighbors.

The measure $n:X\to(0,\infty)$ given by
\begin{align*}
    n(x)=\sum_{y\in X}b(x,y) \quad \textup{ for  } x\in X.
\end{align*}
plays a distinguished role in the proof of classical Cheeger
inequalities.  In the case where $b:X\times X\to\{0,1\}$, $n(x)$ gives the
number of neighbors of a vertex $x$.

\subsection{Intrinsic metrics}

We call a pseudo metric $d$ for a graph $b$ on $(X,m)$ an \emph{intrinsic metric}
if
\begin{align*}
 \sum_{y\in X}b(x,y)d(x,y)^{2}\leq m(x)\quad \mbox{ for all } x\in X.
\end{align*}
The concept of intrinsic metrics was first studied systematically  by Sturm \cite{Stu} for strongly local regular Dirichlet forms and it was generalized to all regular Dirichlet forms by Frank/Lenz/Wingert in  \cite{FLW}. By \cite[Lemma~4.7, Theorem~7.3]{FLW} it can be seen that our definition coincides with the one of \cite{FLW}.
A possible choice  for $d$ is the path metric induced by the edge weights $w(x,y)=((m/n)(x)\wedge (m/n)(y))^{\frac{1}{2}}$, for $x\sim y,$
see e.g. \cite{Hu}. Moreover, the natural graph metric (i.e., the path metric with weights $w(x,y)=1$ for $x\sim y$) is  intrinsic  if $m\ge n$.
Intrinsic metrics for graphs were recently discovered independently in various
contexts, see e.g. \cite{FOLZ,FOLZ2,FOLZ3,GHM,HKW,Hu,
Hu2,HKMW,MU}, where certain variations of the concept also go under the name adapted metrics.

\subsection{Isoperimetric constant}\label{s:isoperimetric_constant}
In this section we use the concept of intrinsic metrics to give a refined definition of the isoperimetric constant. As it turns out, this novel isoperimetric constant is more suitable than the classical one if $n\ge m$.
Let $W\subseteq X$.
We define the boundary $\partial W$ of $W$ by
\begin{align*}
\partial W=\{(x,y)\in W\times X\setminus W\mid b(x,y)>0\}.
\end{align*}
For a given intrinsic metric $d $ we set the measure of
the boundary as
$${|\partial W|}=\sum_{(x,y)\in \partial W}b(x,y)d(x,y).$$
Note that ${|\partial W|}<\infty$ for finite $W\subseteq X$
by the Cauchy-Schwarz inequality and the assumption that
$\sum_{y}b(x,y)<\infty$. We define the \emph{isoperimetric
constant} or \emph{Cheeger constant} $\al(U)=\al_{d,m}(U)$ for
$U\subseteq X$  as
\begin{align*}
\al(U)= \inf_{W\subseteq U \mbox{{\scriptsize
finite}}}\frac{{|\partial W|}}{m(W)}.
\end{align*}
If $U=X$, we write
\begin{align*}
    \al=\al(X).
\end{align*}
For  $b:X\times X\to\{0,1\}$ and $d$ the natural graph metric  the
measure of the boundary $|\partial W|$ is number of edges leaving $W$. If additionally $m=n$, then our definition of $\al$ coincides with the classical one from \cite{DK}.

\subsection{Graph Laplacians}

Denote by $C_{c}(X)$ the space of real valued functions on $X$
with compact support. Let $\ell^{2}(X,m)$ be the space of square
summable real valued functions on $X$ with respect to the measure
$m$ which comes equipped with the scalar product
$\as{u,v}=\sum_{x\in X}u(x)v(x)m(x)$
and the norm     $\aV{u}=\aV{u}_{m}=  \as{u,u}^{\frac{1}{2}}$. Let the form $Q=Q_{b}$ with domain $D$ be given by
\begin{align*}
    Q(u)=\frac{1}{2}\sum_{x,y\in X}b(x,y)(u(x)-u(y))^{2},\quad
    D=\ov{C_{c}(X)}^{\|\cdot\|_{Q}},
\end{align*}
where
$\|\cdot\|_{Q}=(Q(\cdot)+\|\cdot\|^{2})^{\frac{1}{2}}$.
The form $Q$ defines a regular Dirichlet form on $\ell^{2}(X,m)$, see
\cite{FOTBOOK,KL}. The corresponding positive selfadjoint operator $L$  can be seen to act as
\begin{align*}
Lf(x)=\frac{1}{m(x)}\sum_{y\in X}b(x,y)(f(x)-f(y)),
\end{align*}
 (cf. \cite[Theorem~9]{KL}).
Let $\ow L$ be the extension of $L$ to $\ow F=\{f:X\to\R\mid\sum_{y\in
X}b(x,y)|f(y)|<\infty\mbox{ for all }x\in X\}$. We have $C_{c}(X)\subseteq D(L)$ if (and only if) $\ow L
C_{c}(X)\subseteq \ell^{2}(X,m)$,
see \cite[Theorem~6]{KL}. In particular, this can easily seen to be the case if the graph is locally finite or if $\inf_{x\in X}m(x)>0$. Note that
$L$ becomes a bounded operator if and only if  $m\ge n$ (cf. \cite[Theorem~9.3]{HKLW}).  In particular, if $m=n$, then $L$ is referred
to as the \emph{normalized Laplacian}.% if $b$ takes values in $\{0,1\}$ and $m=n$. \Hmm{Chung und andere nennen $L$ den normierten Laplace Operator falls $n=m$, unabhaengig davon ob b Werte in $\{0,1\}$ animmt oder nicht. Ist diese Einschraenkung wichtig? }

We denote the bottom of the spectrum $\si(L)$ and the essential
spectrum $\si_{\mathrm{ess}}(L)$ of $L$ by
$$\lm_{0}(L)=\inf\si(L) \quad\mbox{and}\quad\lm_{0}^{\mathrm{ess}}(L)=\inf \si_{\mathrm{ess}}(L) .$$

%%%%%%%%%SUBSECTON%%%%%%%%%%%%%%%%%%%%%%%%%%%%%%%%%%%%%%%%%%%%%%%%%%%%%
\section{Cheeger inequalities}\label{main}
Let $b$ be a graph over $(X,m)$ and $d$ be an intrinsic metric.
In this section we prove the main results of the paper.

\subsection{Main results}
\begin{thm}\label{t:cheeger}
$$\lm_{0}(L)\ge\frac{\al^{2}}{2}.$$
\end{thm}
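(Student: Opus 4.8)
The plan is to establish the one-sided form inequality $Q(f)\ge\frac{\al^{2}}{2}\aV{f}^{2}$ for every $f$ in a form core and then invoke $\lm_{0}(L)=\inf_{f\neq 0}Q(f)/\aV{f}^{2}$. First I would reduce to nonnegative, compactly supported $f$: since $(\av{a}-\av{b})^{2}\le(a-b)^{2}$ we have $Q(\av{f})\le Q(f)$ while $\aV{\av{f}}=\aV{f}$, and since $C_{c}(X)$ is dense in $D$ in the form norm $\|\cdot\|_{Q}$, it suffices to prove the inequality for $0\le f\in C_{c}(X)$ and pass to the limit.

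The heart of the argument is a co-area (layer-cake) identity that brings in the isoperimetric constant. Writing $g=f^{2}$ and $\Om_{t}=\{x\in X\mid g(x)>t\}$, each $\Om_{t}$ is finite, and a pair $(x,y)$ lies in $\partial\Om_{t}$ precisely when $g(x)>t\ge g(y)$. Integrating this condition in $t$ and using the symmetry of $b$ and $d$ gives
\[
\frac{1}{2}\sum_{x,y\in X}b(x,y)d(x,y)\av{f(x)^{2}-f(y)^{2}}=\int_{0}^{\infty}\av{\partial\Om_{t}}\,dt.
\]
By the definition of $\al$, each finite $\Om_{t}$ satisfies $\av{\partial\Om_{t}}\ge\al\,m(\Om_{t})$, and $\int_{0}^{\infty}m(\Om_{t})\,dt=\sum_{x}g(x)m(x)=\aV{f}^{2}$, so the right-hand side is at least $\al\aV{f}^{2}$.

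It remains to bound the left-hand side by $\sqrt{2Q(f)}\,\aV{f}$, and this is the step where the intrinsic metric does the essential work. Factoring $f(x)^{2}-f(y)^{2}=(f(x)-f(y))(f(x)+f(y))$ and applying the Cauchy--Schwarz inequality to the double sum, splitting the summand as $\sqrt{b(x,y)}\av{f(x)-f(y)}$ against $\sqrt{b(x,y)}\,d(x,y)\av{f(x)+f(y)}$, produces the factor $(2Q(f))^{1/2}$ together with $\big(\sum_{x,y}b(x,y)d(x,y)^{2}(f(x)+f(y))^{2}\big)^{1/2}$. Estimating $(f(x)+f(y))^{2}\le 2(f(x)^{2}+f(y)^{2})$ and summing first over $y$, the intrinsic-metric condition $\sum_{y}b(x,y)d(x,y)^{2}\le m(x)$ bounds this last sum by $4\aV{f}^{2}$. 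Combining the two directions yields $\al\aV{f}^{2}\le\sqrt{2Q(f)}\,\aV{f}$, and squaring gives $Q(f)\ge\frac{\al^{2}}{2}\aV{f}^{2}$.

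The one delicate point is precisely this substitution. The classical Dodziuk bound distributes the weight differently and is left with $\sum_{y}b(x,y)=n(x)$ in place of $m(x)$, which is exactly why it carries a vertex-degree factor. The entire gain here comes from carrying the factor $d(x,y)$ inside the boundary measure $\av{\partial W}$ and then inserting a single factor $d(x,y)$ on the $f(x)+f(y)$ side of Cauchy--Schwarz, so that the defining inequality of an intrinsic metric applies verbatim and no degree bound appears. I expect the main thing to verify carefully is that this is the only place where a degree term would otherwise enter, and that the finiteness and integrability used in the co-area identity are justified for compactly supported $f$.
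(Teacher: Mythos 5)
Your proposal is correct and follows essentially the same route as the paper: the co-area formula applied to $f^{2}$, the bound $\av{\partial\Om_{t}}\ge\al\, m(\Om_{t})$, and a Cauchy--Schwarz step that places the factor $d(x,y)$ next to $f(x)+f(y)$ so that the intrinsic-metric condition $\sum_{y}b(x,y)d(x,y)^{2}\le m(x)$ replaces the vertex-degree bound (this is the paper's Lemma~\ref{l:form}), followed by the Rayleigh--Ritz characterization of $\lm_{0}$. Your preliminary reduction to nonnegative $f$ is harmless but not needed, since the argument only ever uses $f^{2}$ and the elementary bound $(f(x)+f(y))^{2}\le 2(f(x)^{2}+f(y)^{2})$, both of which are sign-insensitive.
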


\begin{Remark} If we consider the operator that is related to the maximal form (cf. Section~\ref{volume}) which is discussed under the name Neumann Laplacian in \cite{HKLW} instead, then we can obtain a similar estimate as Theorem~\ref{t:cheeger}. One only has to redefine the isoperimetric constant by taking the infimum over all sets of finite measure. With this choice, all of our proofs work analogously.
\end{Remark}

Under the additional assumption that $L$ is bounded, we recover the classical Cheeger
inequality from \cite{F,M} which can be seen to be stronger than the one of \cite{DK} by the Taylor expansion. We say that $d\geq 1$ (respectively $d\le 1$) for neighbors if $d(x,y)\ge1$ (respectively $d(x,y)\le 1$) for all $x\sim y$,

\begin{thm}\label{t:cheeger2}If $m\ge n$ and $d\ge1$ or $d\le1$ for neighbors,
 then
$$\lm_{0}(L)\ge 1-\sqrt{1-{\al^{2}}}.$$
\end{thm}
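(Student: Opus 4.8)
The plan is to combine the variational characterization of $\lm_0(L)$ with a co-area argument and then an improved Cauchy--Schwarz step that, unlike the one behind the $\al^2/2$ bound, retains a favorable negative term. Since $C_c(X)$ is a form core, $\lm_0(L)=\inf\{Q(f)/\aV{f}^2:f\in C_c(X),\ f\neq0\}$, so it suffices to prove $Q(f)\ge(1-\sqrt{1-\al^2})\aV{f}^2$ for every $f\in C_c(X)$. First I would record the functional form of the isoperimetric inequality: applying the layer-cake decomposition $\av{f(x)^2-f(y)^2}=\int_0^\infty\av{\mathbf 1_{\Om_t}(x)-\mathbf 1_{\Om_t}(y)}\,dt$ with $\Om_t=\{f^2>t\}$, multiplying by $b(x,y)d(x,y)$ and summing over ordered pairs, each crossing edge of $\Om_t$ is counted twice, so that
\begin{align*}
\sum_{x,y\in X}b(x,y)d(x,y)\av{f(x)^2-f(y)^2}=2\int_0^\infty\av{\partial\Om_t}\,dt\ge 2\al\int_0^\infty m(\Om_t)\,dt=2\al\aV{f}^2,
\end{align*}
using $\av{\partial\Om_t}\ge\al\,m(\Om_t)$ for the finite sets $\Om_t$ and Fubini for the last equality.

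Next I would factor $\av{f(x)^2-f(y)^2}=\av{f(x)-f(y)}\,\av{f(x)+f(y)}$ and apply Cauchy--Schwarz, distributing the metric factor $d(x,y)$ according to the hypothesis. If $d\ge1$ for neighbors, I would attach $d$ to the summation factor,
\begin{align*}
2\al\aV{f}^2\le\Big(\sum_{x,y}b(x,y)(f(x)-f(y))^2\Big)^{1/2}\Big(\sum_{x,y}b(x,y)d(x,y)^2(f(x)+f(y))^2\Big)^{1/2}.
\end{align*}
The first factor equals $(2Q(f))^{1/2}$. For the second I would write $(f(x)+f(y))^2=2f(x)^2+2f(y)^2-(f(x)-f(y))^2$; the positive part is bounded by $4\aV{f}^2$ via the intrinsic-metric inequality $\sum_y b(x,y)d(x,y)^2\le m(x)$, while $d\ge1$ for neighbors gives $\sum_{x,y}b(x,y)d(x,y)^2(f(x)-f(y))^2\ge 2Q(f)$, so the whole second factor is at most $(4\aV{f}^2-2Q(f))^{1/2}$. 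If instead $d\le1$ for neighbors, I would attach $d$ to the difference factor; then $d\le1$ gives $\sum_{x,y}b(x,y)d(x,y)^2(f(x)-f(y))^2\le 2Q(f)$, and the same expansion of $(f(x)+f(y))^2$ together with $m\ge n$ (so that $\sum_x n(x)f(x)^2\le\aV{f}^2$) bounds $\sum_{x,y}b(x,y)(f(x)+f(y))^2$ by $4\aV{f}^2-2Q(f)$. Either way, both regimes collapse to the single inequality
\begin{align*}
2\al\aV{f}^2\le\big(2Q(f)\big)^{1/2}\big(4\aV{f}^2-2Q(f)\big)^{1/2}.
\end{align*}

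Finally I would square, divide by $\aV{f}^4$, and set $t=Q(f)/\aV{f}^2$, turning the estimate into $\al^2\le 2t-t^2=1-(1-t)^2$, i.e.\ $(1-t)^2\le 1-\al^2$; taking square roots yields $t\ge 1-\sqrt{1-\al^2}$, and the claim follows by taking the infimum over $f$. To keep every expression real I must verify two side conditions: that $4\aV{f}^2-2Q(f)\ge0$, which follows from $Q(f)\le 2\aV{f}^2$ (a consequence of $m\ge n$), and that $\al\le1$, which follows by testing single vertices and applying Cauchy--Schwarz against the intrinsic-metric condition. I expect the main subtlety to be precisely the bookkeeping in the Cauchy--Schwarz step: choosing, in each of the two regimes for $d$, the split that converts the identity $(f(x)+f(y))^2=2f(x)^2+2f(y)^2-(f(x)-f(y))^2$ into the crucial extra $-2Q(f)$ contribution---this negative term is exactly what upgrades the naive $\al^2/2$ bound to $1-\sqrt{1-\al^2}$---while checking that the hypotheses $m\ge n$ and $d\ge1$ (respectively $d\le1$) are each invoked only where they genuinely apply.
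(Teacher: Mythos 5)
Your proposal is correct and follows essentially the same route as the paper's proof of Lemma~\ref{l:form}: the co-area formulae applied to $f^2$, Cauchy--Schwarz, and the identity $(f(x)+f(y))^2=2f(x)^2+2f(y)^2-(f(x)-f(y))^2$ to retain the crucial $-2Q(f)$ term, leading to the quadratic inequality $Q(f)^2-2Q(f)+\al^2\le0$. The only cosmetic difference is that in the $d\le1$ case you attach $d$ to the difference factor before applying Cauchy--Schwarz, whereas the paper keeps it on the sum factor and then estimates $d\le 1$ there; both yield the same bound.
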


In order to estimate  the essential spectrum let the \emph{isoperimetric constant at infinity} be given by
\begin{align*}
\al_{\infty}=\sup_{K\subseteq X\mbox{{\scriptsize finite}}}
\al(X\setminus K),
\end{align*}
which coincides with the one of  \cite{KL2} in the case of the natural graph metric and with the one of \cite{F,K} if additionally $b: X \times X \to \{0,1\}$ and $m=n$.
Note that the assumptions of the following theorem are in particular fulfilled if the graph is locally finite or if $\inf_{x\in X}m(x)>0$.

\begin{thm}\label{t:cheeger3}Assume $C_{c}(X)\subseteq D(L)$.
Then,
$$\lm_{0}^{\mathrm{ess}}(L)\ge\frac{\al_{\infty}^{2}}{2}.$$
\end{thm}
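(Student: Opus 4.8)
The plan is to reduce the statement to the bottom-of-spectrum estimate of Theorem~\ref{t:cheeger}, applied locally on the complements of finite sets, and then to pass to the essential spectrum by a Persson-type decomposition principle. First I would establish a localized version of Theorem~\ref{t:cheeger}: for every finite $K\subseteq X$ and every $0\neq f\in C_{c}(X\setminus K)$,
$$ Q(f)\ \ge\ \frac{\al(X\setminus K)^{2}}{2}\,\aV{f}^{2}. $$
This follows verbatim from the argument behind Theorem~\ref{t:cheeger}, the only change being the support restriction. Setting $\Omega_{t}=\{x:f(x)^{2}>t\}$, the co-area formula gives $\frac12\sum_{x,y}b(x,y)d(x,y)|f(x)^{2}-f(y)^{2}|=\int_{0}^{\infty}\av{\partial\Omega_{t}}\,dt$; since $f$ vanishes on $K$, every level set satisfies $\Omega_{t}\subseteq X\setminus K$, so $\av{\partial\Omega_{t}}\ge\al(X\setminus K)\,m(\Omega_{t})$ and the right-hand side is at least $\al(X\setminus K)\aV{f}^{2}$. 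Bounding the left-hand side from above by Cauchy--Schwarz, using $(f(x)+f(y))^{2}\le 2(f(x)^{2}+f(y)^{2})$ together with the intrinsic-metric inequality $\sum_{y}b(x,y)d(x,y)^{2}\le m(x)$, produces $\sqrt{2Q(f)}\,\aV{f}$, and comparing the two bounds yields the claim. Equivalently, the Dirichlet restriction $L_{X\setminus K}$ (the operator of the form $Q$ on $\ov{C_{c}(X\setminus K)}^{\|\cdot\|_{Q}}$) satisfies $\lm_{0}(L_{X\setminus K})\ge\al(X\setminus K)^{2}/2$.

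Next I would invoke a Persson-type theorem, namely $\lm_{0}^{\mathrm{ess}}(L)=\sup_{K\subseteq X\text{ finite}}\lm_{0}(L_{X\setminus K})$, where $\lm_{0}(L_{X\setminus K})=\inf\{Q(f)/\aV{f}^{2}:0\neq f\in C_{c}(X\setminus K)\}$. Here the hypothesis $C_{c}(X)\subseteq D(L)$ is essential: it guarantees $C_{c}(X\setminus K)\subseteq D(L)$, that $\as{Lf,f}=Q(f)$ on these test functions, and that they form a core, so that the variational quantities above genuinely compute the spectral data of $L$. For the direction I actually need, $\lm_{0}^{\mathrm{ess}}(L)\ge\sup_{K}\lm_{0}(L_{X\setminus K})$, I would argue by the min-max principle. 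Fix $\Lambda<\lm_{0}(L_{X\setminus K})$. Any subspace $S\subseteq D(L)$ with $\dim S=|K|+1$ exceeds the dimension $|K|$ of $\ell^{2}(K,m)$, so the restriction map $f\mapsto f|_{K}$ has a nonzero kernel vector $f\in S$; this $f$ lies in $\ov{C_{c}(X\setminus K)}^{\|\cdot\|_{Q}}$, whence $Q(f)\ge\lm_{0}(L_{X\setminus K})\aV{f}^{2}>\Lambda\aV{f}^{2}$. Thus $\sup_{f\in S}Q(f)/\aV{f}^{2}>\Lambda$ for every such $S$, so the spectral subspace of $L$ below $\Lambda$ has dimension at most $|K|$ and $\Lambda\notin\si_{\mathrm{ess}}(L)$; letting $\Lambda\uparrow\lm_{0}(L_{X\setminus K})$ gives the claim.

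Combining the two steps and using that $K\mapsto\al(X\setminus K)$ is monotone (so that the increasing map $t\mapsto t^{2}/2$ preserves the supremum),
$$ \lm_{0}^{\mathrm{ess}}(L)=\sup_{K}\lm_{0}(L_{X\setminus K})\ \ge\ \sup_{K}\frac{\al(X\setminus K)^{2}}{2}\ =\ \frac{\al_{\infty}^{2}}{2}. $$

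I expect the main obstacle to be the Persson-type theorem, and within it the density step asserting that a form-domain function vanishing on the finite set $K$ actually lies in $\ov{C_{c}(X\setminus K)}^{\|\cdot\|_{Q}}$ (equivalently, that $L_{X\setminus K}$ faithfully captures the low-energy functions supported away from $K$). This localization point, together with the verification that $C_{c}(X)$ is a core so that the min-max computation is legitimate, is precisely where the regularity of the Dirichlet form and the standing assumption $C_{c}(X)\subseteq D(L)$ must be used with care; by contrast, the Cheeger estimate itself transfers from Theorem~\ref{t:cheeger} with no new analytic input.
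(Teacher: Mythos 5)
Your argument is correct, but the passage to the essential spectrum is genuinely different from the paper's. Your first step is just Lemma~\ref{l:form} with $U=X\setminus K$ (the paper's lemma is already stated for functions supported in an arbitrary $U$), so the two proofs agree there. The divergence is in the second step: the paper observes that $C_{c}(X)\subseteq D(L)$ forces the functions $y\mapsto b(x,y)/m(y)$ to lie in $\ell^{2}(X,m)$, so that $L_{X\setminus K}$ is a \emph{compact perturbation} of $L$, giving $\lm_{0}^{\mathrm{ess}}(L)=\lm_{0}^{\mathrm{ess}}(L_{X\setminus K})\ge \lm_{0}(L_{X\setminus K})$ in one line; you instead prove the inequality $\lm_{0}^{\mathrm{ess}}(L)\ge\sup_{K}\lm_{0}(L_{X\setminus K})$ directly by min--max, i.e., the easy half of a Persson-type decomposition principle. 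The obstacle you flag is harmless precisely because $K$ is finite: if $f\in D$ vanishes on $K$ and $f_{n}\in C_{c}(X)$ converges to $f$ in form norm, then $f_{n}\to f$ pointwise, and $g_{n}=f_{n}-\sum_{x\in K}f_{n}(x)1_{x}\in C_{c}(X\setminus K)$ still converges to $f$ in form norm since $Q(1_{x})=n(x)<\infty$ and $f_{n}(x)\to 0$ for $x\in K$; alternatively you can bypass the closure altogether by citing Lemma~\ref{l:form} for $u\in D$ supported in $X\setminus K$. One correction: you call the hypothesis $C_{c}(X)\subseteq D(L)$ ``essential'' for your route, but it is not --- $C_{c}(X)\subseteq D(Q)$ holds by construction, the min--max principle can be run over form-domain subspaces, and the localization uses only finiteness of $K$ --- so your argument actually proves the estimate without that assumption, which is where the two approaches differ in substance: the paper's compact-perturbation argument needs the hypothesis but is shorter and yields the stronger fact that $L$ and $L_{X\setminus K}$ have the same essential spectrum, while your min--max argument is more elementary and more general.
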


\subsection{Co-area formulae}

The key ingredients for the proof are the following well-known area and
co-area formulae. For example, they are already found in \cite{KL2}, see also \cite{Gri}.  We include a short proof for the sake of convenience. Let $\ell^{1}(X,m)=\{f:X \to \R \mid \sum_{x\in X}|f(x)|m(x)<\infty\}$.
\medskip

\begin{lemma}
Let $f\in \ell^{1}(X,m)$, $f\ge0$ and $\Om_{t}:=\{x\in X\mid f(x)>t\}$.
Then,
\begin{align*}
\frac{1}{2}    \sum_{x,y\in X} b(x,y)d(x,y)|f(x)-f(y)|=\int_{0}^{\infty}{|\partial \Om_{t}}|dt,
\end{align*}
where the value $\infty$ on both sides of the equation is allowed, and
\begin{align*}
\sum_{x\in X}f(x)m(x) =\int_{0}^{\infty}m(\Omega_{t})dt.
\end{align*}
\end{lemma}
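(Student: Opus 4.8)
The plan is to recognize both identities as manifestations of the layer-cake (Cavalieri) principle, the only analytic input being Tonelli's theorem, which applies without restriction because every quantity involved is non-negative. This is also what makes the statement permit the value $\infty$ on both sides with no integrability hypothesis beyond $f\ge 0$. For the second (area) formula I would start from the pointwise representation $f(x)=\int_0^\infty \mathbf{1}_{\{f(x)>t\}}\,dt$, multiply by $m(x)$, sum over $x\in X$, and interchange the summation with the integral by Tonelli. The resulting inner sum is $\sum_{x\in X} m(x)\,\mathbf{1}_{\{f(x)>t\}}=m(\Omega_t)$, which gives the claim; the hypothesis $f\in\ell^1(X,m)$ guarantees finiteness of the two sides but is not actually used in the manipulation.

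For the first (co-area) formula the pointwise ingredient is the elementary identity, valid for $a,b\ge 0$,
$$(a-b)_+=\int_0^\infty \mathbf{1}_{\{a>t\}}\,\mathbf{1}_{\{b\le t\}}\,dt,$$
where $(\cdot)_+$ denotes the positive part; one checks it by noting that the integrand equals $1$ exactly on the interval $b\le t<a$. I would then unfold the definition of the boundary measure as
$$|\partial\Omega_t|=\sum_{x,y\in X} b(x,y)\,d(x,y)\,\mathbf{1}_{\{f(x)>t\}}\,\mathbf{1}_{\{f(y)\le t\}},$$
observing that $(x,y)\in\partial\Omega_t$ is precisely the condition $f(x)>t\ge f(y)$ together with $b(x,y)>0$, the latter being automatic since $b(x,y)d(x,y)=0$ when $b(x,y)=0$. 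Integrating in $t$ over $[0,\infty)$ and interchanging sum and integral by Tonelli yields, via the identity above,
$$\int_0^\infty |\partial\Omega_t|\,dt=\sum_{x,y\in X} b(x,y)\,d(x,y)\,(f(x)-f(y))_+.$$

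The concluding step is a symmetrization. Since $b$ and $d$ are both symmetric, relabelling $x\leftrightarrow y$ shows that the right-hand side is unchanged if $(f(x)-f(y))_+$ is replaced by $(f(y)-f(x))_+$; averaging the two expressions and using $(s)_+ + (-s)_+ = |s|$ produces simultaneously the factor $\tfrac12$ and the absolute value $|f(x)-f(y)|$, which is exactly the asserted identity.

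I do not expect a genuinely hard step here; the only points deserving a word of care are (i) the measurability of $t\mapsto|\partial\Omega_t|$ on $[0,\infty)$, which holds because it is a countable sum of non-negative measurable step functions of $t$, and (ii) the legitimacy of exchanging summation with Lebesgue integration, supplied by Tonelli in the non-negative regime. Finally, the distinction between the strict and non-strict inequalities in the indicators (for instance $f(y)\le t$ versus $f(y)<t$) is immaterial, as the two differ only on a set of $t$ of Lebesgue measure zero.
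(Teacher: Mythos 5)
Your proof is correct and follows essentially the same route as the paper: both identities are obtained by the layer-cake representation together with Tonelli's theorem for non-negative terms. The only (cosmetic) difference is in the first formula, where the paper encodes the condition $(x,y)\in\partial\Omega_t$ via the interval $I_{x,y}=[f(x)\wedge f(y),f(x)\vee f(y))$ and introduces the factor $\tfrac12$ at the outset, whereas you work with the product of indicators $1_{\{f(x)>t\}}1_{\{f(y)\le t\}}$, arrive at the positive part $(f(x)-f(y))_+$, and recover the $\tfrac12$ and the absolute value by symmetrizing at the end.
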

\begin{proof}For $x,y\in X$, $x\sim y$ with $f(x) \not= f(y)$, let the interval $I_{x,y}$ be given by
$I_{x,y}:=[f(x)\wedge f(y),f(x)\vee f(y))$ and let
$|I_{x,y}|=|f(x)-f(y)|$ be the length of $I_{x,y}$. Then,
$(x,y)\in\partial \Om_{t}$ if and only if $t\in I_{x,y}$. Hence,
by Fubini's theorem,
\begin{align*}
\int_{0}^{\infty}{|\partial \Om_{t}|}dt &= \frac{1}{2}
\int_{0}^{\infty}\sum_{x,y\in X}b(x,y)d(x,y) 1_{I_{x,y}}(t)dt\\
&= \frac{1}{2}\sum_{x,y\in X}b(x,y)d(x,y)  \int_{0}^{\infty}1_{I_{x,y}}(t)dt \\
&= \frac{1}{2} \sum_{x,y\in X}b(x,y)d(x,y) |f(x)-f(y)|.
\end{align*}
Note that $x\in \Om_{t}$ if and only if $1_{(t,\infty)}(f(x))=1$.
Again, by Fubini's theorem,
\begin{align*}
\int_{0}^{\infty}m(\Om_{t})dt&=  \int_{0}^{\infty}\sum_{x\in X}m(x)1_{(t,\infty)}(f(x))dt \\
&=\sum_{x\in X}m(x) \int_{0}^{\infty}1_{(t,\infty)}(f(x))dt =\sum_{x\in X}m(x) f(x).
\end{align*}
\end{proof}

\subsection{Form estimates}

\begin{lemma}\label{l:form} For  $U\subseteq X$ and  $u\in D$ with  support in $U$ and $\|u\|_m =1$
\begin{align*}%\label{eq:form1}
    Q(u)\geq \frac{\al(U)^{2}}{2}.
\end{align*}
Moreover, if  $m\ge n$  and $d\ge1$ or $d\le1$ for neighbors,
then
\begin{align*}%\label{eq:form2}
Q(u)^{2}-2Q(u)+\al(U)^{2} \leq 0.
\end{align*}
\end{lemma}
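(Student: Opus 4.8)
The plan is to feed the co-area and area formulae of the previous lemma, applied to $f=u^{2}$, into the usual Cheeger argument. First I would reduce to finitely supported $u$: since $Q$ and $\aV{\cdot}$ are continuous in $\|\cdot\|_{Q}$ and $C_{c}(X)$ is dense in $D$, it suffices to prove both inequalities for $u\in C_{c}(X)$ with $\supp u\subseteq U$, the general case following by a standard approximation argument. The value of this reduction is that for finitely supported $u$ each superlevel set $\Om_{t}=\{u^{2}>t\}$ is a \emph{finite} subset of $U$, so that the defining estimate $\av{\partial\Om_{t}}\ge\al(U)\,m(\Om_{t})$ of the Cheeger constant is available verbatim.

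For the first inequality I would apply the two formulae to $f=u^{2}\in\ell^{1}(X,m)$ and combine them with the isoperimetric bound on each level set:
\[
\frac12\sum_{x,y}b(x,y)d(x,y)\av{u(x)^{2}-u(y)^{2}}=\int_{0}^{\infty}\av{\partial\Om_{t}}\,dt\ge\al(U)\int_{0}^{\infty}m(\Om_{t})\,dt=\al(U).
\]
Then I would factor $\av{u(x)^{2}-u(y)^{2}}=\av{u(x)-u(y)}\,\av{u(x)+u(y)}$ and apply the Cauchy--Schwarz inequality with weight $b(x,y)$, keeping the factor $d(x,y)$ with the term $u(x)+u(y)$, to get
\[
\al(U)\le\frac12\big(2Q(u)\big)^{1/2}S^{1/2},\qquad S:=\sum_{x,y}b(x,y)d(x,y)^{2}\big(u(x)+u(y)\big)^{2}.
\]
Using $(u(x)+u(y))^{2}=2u(x)^{2}+2u(y)^{2}-(u(x)-u(y))^{2}$, the symmetry of $b$ and $d$, and the intrinsic condition $\sum_{y}b(x,y)d(x,y)^{2}\le m(x)$ gives the crude bound $S\le4\aV{u}^{2}=4$. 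Squaring then yields $\al(U)^{2}\le2Q(u)$.

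For the refined inequality the whole point is to improve $S\le4$ to $S\le4-2Q(u)$, since substituting this into the same Cauchy--Schwarz estimate gives $\al(U)^{2}\le\frac14\cdot2Q(u)\big(4-2Q(u)\big)=2Q(u)-Q(u)^{2}$, which rearranges to the claimed quadratic inequality. This is where the hypotheses enter, and the two cases demand opposite manipulations. If $d\le1$ for neighbors I would discard $d^{2}$ at once, $S\le\sum_{x,y}b(x,y)(u(x)+u(y))^{2}=4\sum_{x}n(x)u(x)^{2}-2Q(u)$, and then invoke $m\ge n$ with $\aV{u}=1$ to bound $\sum_{x}n(x)u(x)^{2}\le1$. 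If instead $d\ge1$ for neighbors I would retain $d^{2}$ and use the identity above to write $S=4\sum_{x}u(x)^{2}\sum_{y}b(x,y)d(x,y)^{2}-\sum_{x,y}b(x,y)d(x,y)^{2}(u(x)-u(y))^{2}$; now the intrinsic condition bounds the first term by $4$, while $d\ge1$ gives $\sum_{x,y}b(x,y)d(x,y)^{2}(u(x)-u(y))^{2}\ge\sum_{x,y}b(x,y)(u(x)-u(y))^{2}=2Q(u)$ for the second.

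The main obstacle I anticipate is precisely this last part: arranging both hypotheses to produce the single bound $S\le4-2Q(u)$, since the cases $d\le1$ and $d\ge1$ use the inequalities $d^{2}\le1$ and $d^{2}\ge1$ in opposite directions and cannot be merged into one computation. A secondary, more technical point, handled in the reduction step, is that for general $u\in D$ the sets $\Om_{t}$ may be infinite although of finite measure; passing first to finitely supported $u$, where $\al(U)$ applies directly, is what legitimizes the co-area step.
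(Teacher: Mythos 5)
Your proposal is correct and follows essentially the same route as the paper: the co-area formulae applied to $f=u^{2}$, Cauchy--Schwarz with the factor $d(x,y)$ attached to $u(x)+u(y)$, the intrinsic-metric condition to bound the resulting sum by $4\|u\|_m^{2}$, and for the refined inequality the identity $(u(x)+u(y))^{2}=2u(x)^{2}+2u(y)^{2}-(u(x)-u(y))^{2}$ with the two cases $d\ge1$ and $d\le1$ treated exactly as in the paper (intrinsic condition plus $d^{2}\ge1$, respectively $d\le1$ plus $n\le m$). The only cosmetic difference is a factor of two in your normalization of $S$, and your remarks on the reduction to $C_{c}(X)$ and on why the level sets are finite make explicit what the paper leaves implicit.
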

\begin{proof}Let $u\in C_{c}(X)$.
 We calculate using the co-area formulae above with $f=u^{2}$
\begin{align*}
\al\|u\|_m^{2}&=\al\int_{0}^{\infty}m(\Om_{t})dt \leq\int_{0}^{\infty}|\partial\Om_{t}| dt \\ \displaybreak
&=\frac{1}{2}\sum_{x,y\in X}b(x,y)d(x,y)|u^{2}(x)-u^{2}(y)| \\
&\leq Q(u)^{\frac{1}{2}}\Big(\frac{1}{2}\sum_{x,y\in X}b(x,y)d(x,y)^{2}(u(x)+u(y))^{2}\Big)^{\frac{1}{2}}\\
&\leq Q(u)^{\frac{1}{2}}\Big(2\sum_{x\in X}u(x)^{2}\sum_{y\in
X}b(x,y)d(x,y)^{2}\Big)^{\frac{1}{2}}\leq{2}^{\frac{1}{2}}
Q(u)^{\frac{1}{2}}\|u\|_m,
\end{align*}
where the final estimate follows from the intrinsic metric
property. The second statement follows if we use in the above estimates
\begin{align*}
\frac{1}{2}&\sum_{x,y\in X}b(x,y)d(x,y)^{2}(u(x)+u(y))^{2}\\
&= 2\sum_{x,y\in X} b(x,y)d(x,y)^2u(x)^2-\frac{1}{2}\sum_{x,y\in X}b(x,y)d(x,y)^{2} (u(x)-u(y))^{2}
\\&\leq 2\|u\|_{m}^{2}-Q(u),
\end{align*}
where we distinguish the cases $d\ge1$ and $d\le 1$: For the first case we use that $d$ is intrinsic and that $-d(x,y)^{2}\le-1$. For the second case, we estimate $d(x,y)\le 1$ in the first line and then use $n\le m$.
The statement follows by the density of $C_{c}(X)$ in $D$.
\end{proof}

\subsection{Proof of the theorems}

\begin{proof}[Proof of Theorem~\ref{t:cheeger} and Theorem~\ref{t:cheeger2}]
By virtue of Lemma~\ref{l:form}, the statements follow  by the variational
characterization of $\lm_{0}$ via the Rayleigh Ritz quotient:
$\lm_{0}=\inf_{u\in
D,\|u\|=1} Q(u)$.
\end{proof}

%\begin{proof}[Proof of Theorem~\ref{t:cheeger3}] Let  $Q_{U}$,  $U\subseteq X$,  be the restriction of $Q$ to $\ov{C_{c}(U)}^{\|\cdot\|_{Q}}$ and $L_{U}$ be the corresponding operator.  Note that $Q_{U}=Q$ on $C_{c}(U)$. The assumption $C_{c}(X)\subseteq D(L)$ is equivalent to $\ow LC_{c}(X)\subseteq \ell^{2}(X,m)$ which is equivalent to  $\ph_{x}:y\mapsto b(x,y)/m(y) +(n/m)(x)1_{\{x\}}(y)$ being in $\ell^{2}(X,m)$ for all $x\in X$, see \cite[Proposition~3.3]{KL} (this consideration also yields $C_{c}(U)\subseteq D(L_{U})$). For $U=X\setminus\{x\}$ a direct calculation yields  $$L=L_{U}+\langle{\ph_{x},\cdot}\rangle\ph_{x}$$ on $C_{c}(X)$ which a core for $D(L)$ (where we use the convention  $L_{U}f(y)=1_{U}(y)(L_{U}f_{0})(y)$, $y\in X$ for $f\in C_{c}(X)$ with $f=f_{0}+f(x)1_{\{x\}}$).  Thus, $L_{U}$ is a rank one perturbation of $L$ for $U=X\setminus\{x\}$. Consequently, for finite  $K\subseteq X$ the operator $L_{X\setminus K}$ is a  finite rank perturbation and, therefore, a compact perturbation of $L$.   Thus, by Lemma~\ref{l:form} we conclude $$\lm_{0}^{\mathrm{ess}}(L)=\lm_{0}^{\mathrm{ess}}(L_{X\setminus K})\ge\lm_{0}(L_{X\setminus K})=\inf_{u\in C_{c}(X\setminus K),\|u\|=1}Q(u)\ge \frac{\al(X\setminus K)^{2}}{2}.$$ This implies the statement. \end{proof}

\begin{proof}[Proof of Theorem~\ref{t:cheeger3}] Let  $Q_{U}$,  $U\subseteq X$,  be the restriction of $Q$ to $\ov{C_{c}(U)}^{\|\cdot\|_{Q}}$ and $L_{U}$ be the corresponding operator. Note that $Q_{U}=Q$ on $C_{c}(U)$. The assumption $C_{c}(X)\subseteq D(L)$ clearly implies $\ow LC_{c}(X)\subseteq \ell^{2}(X,m)$ which is equivalent to the fact that functions $y\mapsto b(x,y)/m(y)$ are in $\ell^{2}(X,m)$ for all $x\in X$, see \cite[Proposition~3.3]{KL}. This implies that for any finite set $K\subseteq X$ the operator $L_{X\setminus K}$ is a compact perturbation of $L$.   Thus, from Lemma~\ref{l:form} we conclude $$\lm_{0}^{\mathrm{ess}}(L)=\lm_{0}^{\mathrm{ess}}(L_{X\setminus K})\ge\lm_{0}(L_{X\setminus K})=\inf_{u\in C_{c}(X\setminus K),\|u\|=1}Q(u)\ge \frac{\al(X\setminus K)^{2}}{2}.$$ This implies the statement of Theorem~\ref{t:cheeger3}. \end{proof}

\subsection{Upper bounds for the bottom of spectrum}
In this section we show an upper bound of $\lm_{0}(L)$ by $\al$ as  in \cite{Do,DKa,DK} for uniformly discrete metric spaces.

\begin{thm} Let $d$ be an intrinsic metric such that $(X,d)$ is uniformly discrete with lower bound $\delta>0$.  Then, $\lm_{0}(L)\le \al/\delta$.
\end{thm}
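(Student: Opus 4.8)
The plan is to bound $\lm_0(L)$ from above by evaluating the Rayleigh quotient on a well-chosen family of test functions and then invoking the variational characterization $\lm_0(L)=\inf_{u\in D,\,u\neq 0}Q(u)/\aV{u}_m^{2}$ used already in the proof of Theorem~\ref{t:cheeger}. Since the isoperimetric constant $\al$ is by definition an infimum over finite sets, the natural candidates are the indicators of such sets: for a finite $W\subseteq X$ I would take $u=1_{W}$, which lies in $C_{c}(X)\subseteq D$ and is therefore an admissible test function with $\aV{1_{W}}_m^{2}=m(W)$.

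First I would compute the energy explicitly. The term $(1_{W}(x)-1_{W}(y))^{2}$ equals $1$ exactly when precisely one of $x,y$ lies in $W$ and vanishes otherwise, so by the symmetry of $b$ one gets
\begin{align*}
Q(1_{W})=\frac{1}{2}\sum_{x,y\in X}b(x,y)\big(1_{W}(x)-1_{W}(y)\big)^{2}=\sum_{(x,y)\in\partial W}b(x,y).
\end{align*}
This is the \emph{unweighted} boundary sum, whereas the isoperimetric constant involves the metric-weighted boundary $\av{\partial W}=\sum_{(x,y)\in\partial W}b(x,y)d(x,y)$. The bridge between the two is precisely uniform discreteness: every boundary pair has $x\neq y$, hence $d(x,y)\ge\delta$, and therefore
\begin{align*}
\av{\partial W}=\sum_{(x,y)\in\partial W}b(x,y)d(x,y)\ge\delta\sum_{(x,y)\in\partial W}b(x,y)=\delta\,Q(1_{W}).
\end{align*}

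Combining these two facts yields $Q(1_{W})/m(W)\le\av{\partial W}/(\delta\,m(W))$ for every finite $W$. Feeding $1_{W}$ into the variational principle and then taking the infimum over all finite $W\subseteq X$ gives
\begin{align*}
\lm_0(L)\le\inf_{W}\frac{Q(1_{W})}{m(W)}\le\frac{1}{\delta}\inf_{W}\frac{\av{\partial W}}{m(W)}=\frac{\al}{\delta},
\end{align*}
which is the assertion. The argument is short and I do not anticipate a genuine obstacle; the only point deserving care is the bookkeeping that identifies $Q(1_{W})$ with the unweighted boundary sum and recognizes that the uniform discreteness hypothesis is exactly what converts this unweighted sum into the metric-weighted boundary appearing in $\al$. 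Finiteness of $\av{\partial W}$ for finite $W$, noted in Section~\ref{s:isoperimetric_constant}, ensures the quotients above are well defined.
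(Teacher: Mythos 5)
Your argument is correct and is essentially identical to the paper's proof: both test the Rayleigh quotient on $1_{W}$, identify $Q(1_{W})$ with the unweighted boundary sum, and use $d\ge\delta$ off the diagonal to get $|\partial W|\ge\delta\,Q(1_{W})$ before taking the infimum over finite $W$. No gaps.
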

\begin{proof} By assumption we have  $d\geq \delta>0$ away from the diagonal. It follows that $|\partial W|\ge \delta \sum_{(x,y)\in\partial W}b(x,y)=\delta Q(1_{W})$ for all $W\subseteq X$ finite. By the inequality $\delta\lm_{0}(L)\leq \delta Q(1_{W})/\|1_{W}\|^{2}\leq |\partial W|/m(W)$, we conclude the statement.
\end{proof}

The  example below shows that, in general, there is no upper bound by $\al$ only.

\begin{eg}
Let  $b_{0}:X\times X\to \{0,1\}$ be a $k$-regular rooted tree
with root $x_{0}\in X$ (that is, each vertex has $k$ forward neighbors). Furthermore, let $b_{1}:X\times X\to\{0,1\}$ be
such that $b_{1}(x,y)=1$ if and only if $x$ and $y$ have the same
distance to $x_0$ with respect to the natural graph distance in $b_{0}$,
and $b_{1}(x,y)=0$ otherwise. Now, let $b=b_{0}+b_{1}$,  $m\equiv1$
and let $d$ be given by the path metric with weights $w(x,y)=(n(x)\vee n(y))^{-\frac{1}{2}}$ for $x\sim y$. Then,
$\al=\al_{d,m}=0$ which can be seen by
$|\partial B_{r}|/m(B_{r})\le k^{-(r-1)/2}\to 0$ as $r\to\infty$, where $B_r$ is the set of vertices that have distance less or equal $r$ to with respect to the natural graph metric.

On the other hand, by \cite[Theorem~2]{KLW}  the heat kernel $p_{t}(x_{0},\cdot)$ of the graph $b$ equals the corresponding heat kernel on the
$k$-regular tree $b_{0}$. Hence, by a Li type theorem, see
\cite[Theorem~7.1]{HKLW} or \cite[Corollary~5.4]{KLVW}, we get $\frac{1}{t}\log p_{t}(x_{0},y)\to\lm_{0}(L)=k+1-2\sqrt{k}$ for any $y\in X$ and $t\to\infty$ (see also \cite[Corollary~6.7]{KLW}). As $\al=0$, this shows that $\al$
can yield no upper bound  without further assumptions.
\end{eg}

%%%%%%%%%%%%%%%%%%%%%%%%%%%%%%%%%%%%%%%%%%%%%%%%%%%%%%%%%%%%%%%%%%%%%%%%%%%

\section{Potentials}\label{potentials}
In this section we briefly discuss how the strategy proposed in
\cite{KL2} to incorporate potentials into the inequalities can be
applied to the new definition of the Cheeger constant. This yields a Cheeger estimate for all regular Dirichlet forms on discrete sets (cf. \cite[Theorem~7]{KL}).

Let $b$ be a graph over a discrete measure space $(X,m)$.
Furthermore, let $c:X\to[0,\infty)$ be a potential and define
\begin{align*}
    Q_{b,c}(u)=\frac{1}{2}\sum_{x,y\in X}b(x,y)(u(x)-u(y))^{2}+\sum_{x\in X}c(x)u(x)^{2}
\end{align*}
on $D(Q_{b,c})=\ov{C_{c}(X)}^{\|\cdot\|_{Q_{b,c}}}$ and let $L_{b,c}$ be the corresponding operator.

Let  $(X',b',m')$ be a copy of $(X,b,m)$.  Let $\dot X=X\cup X'$,
$\dot m:X\to(0,\infty)$ such that $\dot m\vert_{X}=m$, $\dot
m\vert_{X'}=m'$  and  let $\dot b:\dot X\times \dot
X\to[0,\infty)$ be given by $\dot b\vert_{X\times X}=b$, $\dot b\vert_{X'\times X'}=b'$, $\dot b(x,x')=c(x)=c'(x')$ for corresponding vertices $x\in X$ and $x'\in X'$ and $\dot b\equiv0$ otherwise. Then,
the restriction $Q_{\dot b, X}$ of the form $Q_{\dot b}$ on $\ell^{2}(\dot X,\dot
m)$ to $D(Q_{\dot b, X})=\ov{C_{c}(X)}^{\|\cdot\|_{Q_{\dot b}}}$
satisfies
\begin{align*}
D(Q_{b,c})=D(Q_{\dot b, X})\quad\mbox{and}\quad   Q_{b,c}=Q_{\dot b, X}.
\end{align*}

Let $d:X\times X\to[0,\infty)$ be a metric for $b$ over $(X,m)$ and assume there is a function $\de:X\to[0,\infty)$ such that
\begin{align*}
    \sum_{y\in X}b(x,y)d(x,y)^{2}+c(x)\de(x)^{2}\leq m(x)\quad \mbox{ for all } x\in X.
\end{align*}

\begin{eg}
(1) For a given intrinsic metric $d$ a possible choice for the function $\de$ is $\de(x)={((m(x)-\sum_{y\in X } b(x,y) d(x,y)^{2})/{c(x)})}^{\frac{1}{2}}$ if $c(x) > 0.$ \\
(2) Choose $d$ as the path metric induced by the edge weights
$w(x,y)=((\frac{m}{n+c})(x)\wedge (\frac{m}{n+c})(y))^{\frac{1}{2}}$ for $x\sim y$  and $\de$ as in (1). If $c>0$, then $\de>0$.
\end{eg}

We next define $\dot d$. Since we are only interested in the subgraph $X$ of $\dot X$, we do not need to bother to extend $d$ to all of $\dot X$ but only set $\dot d\vert_{X\times X}=d$ and $\dot d(y,x')=d(x,y)=\de(x)$ for $x,y\in X$ and the  corresponding vertex $x'\in X'$ of $x$.  Defining $\dot\al(X)=\dot\al_{d,m}(X)$ by
\begin{align*}
\dot\al(X)=\inf_{W\subseteq X \mbox{{\scriptsize
finite}}}\frac{|\partial W|_{\dot d}}{m(W)}.
\end{align*}
with $|\partial W|_{\dot d}=\sum_{(x,y)\in\partial W}(
b(x,y)d(x,y)+c(x)\de(x))$ implies that    $
\dot\al(X)=\al_{\dot d,\dot m}(X)$, where the right hand side is
the Cheeger constant of the subgraph $X\subseteq \dot X$ as in Section~\ref{s:isoperimetric_constant}. Hence, we get
$$\lm_{0}(L_{b,c})\ge\frac{\dot\al(X)^{2}}{2}$$
by Lemma~\ref{l:form} and the arguments from the proof  of
Theorem~\ref{t:cheeger}.

%%%%%%%%%%%%%%%%%%%%%%%%%%%%%%%%%%%%%%%%%%%%%%%%%%%%%%%%

\section{Upper bounds by  volume growth}\label{volume}
In this section we relate the isoperimetric constant to the exponential volume growth of the graph. Let $b$ be a graph over a discrete measure space $(X,m)$ and let $d$ be an intrinsic metric. We
let $B_{r}(x)=\{y\in X\mid d(x,y)\leq r\}$ and define the
\emph{exponential volume growth} $\mu=\mu_{d,m}$ by
\begin{align*}
    \mu=\liminf_{r\to\infty}\inf_{x\in X}\frac{1}{r}\log \frac{m(B_{r}(x))}{m(B_{1}(x))}.
\end{align*}
Other than for the classical notions of isoperimetric constants and exponential volume growth on graphs (see  \cite{BMST, DKa, Fuj, M}), it is, geometrically, not obvious that $\al=\al_{d,m}$ and
$\mu=\mu_{d,m}$ can be related. However, given a Brooks-type theorem, the proof is rather immediate. Therefore, let the \emph{maximal form domain} be given by
\begin{align*}
    D^{\max}:=\{u\in \ell^{2}(X,m)\mid Q^{\max}(u)=\frac{1}{2}\sum_{x,y\in X}b(x,y)(u(x)-u(y))^{2}<\infty\}.
\end{align*}

\begin{thm}\label{t:volume}If $D=D^{\max}$, then $2\al\leq\mu$. In particular, this holds if one of the following assumptions is satisfied:
\begin{itemize}
  \item [(a)] The graph $b$ is locally finite and $d$ is an intrinsic path metric such that $(X,d)$ is metrically complete.
  \item [(b)] Every infinite path of vertices has infinite measure.
\end{itemize}
\end{thm}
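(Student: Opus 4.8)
The plan is to connect the isoperimetric constant $\al$ to the volume growth $\mu$ via a Brooks-type argument, whose core is the observation that a subexponential (in fact, any) volume growth bound produces test functions witnessing a small value of $\al$. The key mechanism is the following. Suppose $2\al > \mu$; I want to derive a contradiction. Since $D = D^{\max}$ by hypothesis, the form $Q = Q^{\max}$ is defined on all of $D^{\max}$, and in particular the variational characterization $\lm_0(L) = \inf_{u \in D, \|u\|=1} Q(u)$ holds with this maximal domain. I would then construct, for a fixed base point $x_0$, a family of compactly supported test functions adapted to the balls $B_r(x_0)$ and estimate their Rayleigh quotients using the exponential volume growth, producing an upper bound on $\lm_0(L)$ that contradicts the lower bound $\lm_0(L) \ge \al^2/2$ from Theorem~\ref{t:cheeger} when $2\al > \mu$.

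Concretely, the natural test functions are the exponentially decaying cutoffs $u(x) = e^{-\be\, d(x_0, x)}$ truncated to a large ball, or rather $v_r = u \cdot 1_{B_r(x_0)}$. First I would compute $Q(u)$ for the untruncated exponential weight: using $|e^{-\be d(x_0,x)} - e^{-\be d(x_0,y)}| \le \be\, e^{-\be (d(x_0,x)\wedge d(x_0,y))} d(x,y)$ (from $|e^{-\be s} - e^{-\be t}| \le \be e^{-\be(s\wedge t)}|s-t|$ and the triangle inequality $|d(x_0,x) - d(x_0,y)| \le d(x,y)$), one gets
\begin{align*}
Q(u) \le \frac{\be^2}{2} \sum_{x,y} b(x,y) d(x,y)^2 e^{-2\be (d(x_0,x)\wedge d(x_0,y))} \le \be^2 \sum_x m(x) e^{-2\be d(x_0,x)} = \be^2 \|u\|^2,
\end{align*}
where the middle step uses the intrinsic metric property $\sum_y b(x,y) d(x,y)^2 \le m(x)$ after symmetrizing. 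Thus $u$ has finite energy precisely when $\|u\|^2 = \sum_x m(x) e^{-2\be d(x_0,x)} < \infty$. The volume growth $\mu$ controls exactly this sum: writing $m(B_r)$ as a measure and decomposing dyadically in $r$, the series $\sum_x m(x) e^{-2\be d(x_0,x)}$ converges whenever $2\be > \mu$, by the definition of $\mu$ as the exponential growth rate of $m(B_r(x))$. Hence for any $\be > \mu/2$ the function $u$ lies in $\ell^2(X,m)$ with $Q(u) < \infty$, so $u \in D^{\max} = D$, and its Rayleigh quotient gives $\lm_0(L) \le \be^2$. Letting $\be \downarrow \mu/2$ yields $\lm_0(L) \le \mu^2/4$; combined with $\lm_0(L) \ge \al^2/2$ this would give $\al^2/2 \le \mu^2/4$, i.e. $2\al^2 \le \mu^2$, which is weaker than claimed. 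To get the sharp $2\al \le \mu$ I would instead feed the exponential test function directly into Lemma~\ref{l:form}'s co-area computation: the chain $\al \|u\|^2 \le \int_0^\infty |\partial \Om_t|\, dt$ applied with $f = u^2$ and the same exponential bound shows $\al \|u\|^2 \le \be \|u\|^2$, giving $\al \le \be$ for every $\be > \mu/2$, hence $2\al \le \mu$.

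The main obstacle is the truncation: the untruncated $u$ is in $D^{\max}$ but to run the co-area argument (which was proved for $f \in \ell^1$, i.e. for $u \in C_c(X)$) I need either compactly supported approximants $v_r = u\cdot 1_{B_r(x_0)}$ or a justification that the estimates in Lemma~\ref{l:form} pass to the limit. The hypothesis $D = D^{\max}$ is exactly what guarantees $u$ is approximable in $\|\cdot\|_Q$ by $C_c(X)$, so that the lower bound $\al \le Q(u)^{1/2} \cdot (\cdot)^{1/2}/\|u\|$ survives the limit; this is where conditions (a) and (b) enter, since each is a known sufficient criterion for $D = D^{\max}$ (metric completeness of an intrinsic path metric on a locally finite graph, respectively infinite measure along every infinite path, both force essential self-adjointness / form uniqueness). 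I would therefore first reduce to the statement "$D = D^{\max} \Rightarrow 2\al \le \mu$", carry out the exponential-test-function co-area estimate carefully with compactly supported truncations $v_r$, and control the boundary contributions of the truncation (edges crossing $\partial B_r$) by showing they are negligible as $r \to \infty$ using the same $\ell^2$ summability of $u$; finally I would cite the self-adjointness literature for the equivalences (a) and (b).
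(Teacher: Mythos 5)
Your overall strategy---combine a Brooks-type upper bound $\lm_0(L)\le \mu^2/c$ with the Cheeger lower bound $\lm_0(L)\ge \al^2/2$---is exactly the paper's strategy. But the paper gets the constant $2\al\le\mu$ by \emph{citing} the sharp Brooks-type theorem of Haeseler--Keller--Wojciechowski, namely $\lm_0(L)\le\mu^2/8$ under $D=D^{\max}$, whereas you try to prove the Brooks-type bound from scratch, and this is where your argument has a genuine gap. Your exponential test function $u=e^{-\be d(x_0,\cdot)}$ only yields $Q(u)\le\be^2\|u\|^2$, hence $\lm_0(L)\le\mu^2/4$ and $\sqrt{2}\,\al\le\mu$ --- you correctly notice this is too weak. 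Your proposed fix, feeding $f=u^2$ directly into the co-area formula, does not give $\al\le\be$ as you claim: the estimate $|u^2(x)-u^2(y)|\le 2\be\, e^{-2\be(d(x_0,x)\wedge d(x_0,y))}d(x,y)$ followed by $e^{-2\be(s\wedge t)}\le e^{-2\be s}+e^{-2\be t}$ and the intrinsic-metric bound produces $\al\|u\|^2\le 2\be\|u\|^2$, i.e.\ $\al\le 2\be$, and the Cauchy--Schwarz variant of Lemma~\ref{l:form} gives at best $\al\le\sqrt{2}\,\be$. Neither reaches $\al\le\be$, so the claimed constant $2\al\le\mu$ is not obtained; the factor you lose in symmetrizing the minimum is exactly the difference between $\mu^2/4$ and the $\mu^2/8$ that the paper imports.

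A second, independent problem is your convergence claim that $\sum_x m(x)e^{-2\be d(x_0,x)}<\infty$ whenever $2\be>\mu$. Since $\mu$ is defined with a $\liminf$ over $r$ (and an $\inf$ over centers), it only controls $m(B_r(x_0))$ along a subsequence of radii; between the good radii the volume can spike, and the global exponential sum can diverge even for $2\be>\mu$. This is why the actual proof of the Brooks-type theorem uses truncated test functions supported on balls chosen along the good subsequence, rather than the untruncated exponential. Your handling of the hypotheses (a) and (b) --- quoting them as known sufficient conditions for $D=D^{\max}$ --- matches the paper, which cites the self-adjointness/form-uniqueness literature for exactly this. To repair the proof you should either cite the $\mu^2/8$ bound as the paper does, or reproduce its more delicate test-function construction; the naive exponential function cannot deliver the stated constant.
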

\begin{proof} Under the assumption $D=D^{\max}$ we have $\lm_{0}(L)\leq \mu^{2}/8$ by \cite[Theorem~4.1]{HKW}. (Note that the $8$ in the denominator as opposed to the $4$ found in \cite{HKW} is explained in \cite[Remark~3]{HKW}.) Thus, the statement follows by Theorem~\ref{t:cheeger}. Note that, by \cite[Theorem~2]{HKMW} and \cite[Corollary 6.3]{HKLW},  (a) implies $D=D^{\max}$ and, by \cite[Theorem~6]{KL}, (b) implies $D=D^{\max}$.
\end{proof}

%%%%%%%%%%%%%%%%%%%%%%%%%%%%%%%%%%%%%%%%%%%%%
\section{Lower bounds by  curvature}\label{lowerbound}
In this section we give a lower bound on the isoperimetric constant by a quantity that is sometimes interpreted as  curvature \cite{DKa,Hu0,KLW}.
Let $b$ be a graph over  $(X,m)$ and let
$d$ be an intrinsic metric.

\subsection{The lower bound}
We fix an orientation on a subset of the edges, that is, we choose
$E_{+},E_{-}\subset X\times X$ with $E_{+}\cap E_{-}=\emptyset$
such that if $(x,y)\in E_{+}$, then $(y,x)\in E_{-}$. We define the
\emph{curvature} with respect to this orientation by $K:X\to\R$
\begin{align*}
    K(x)=\frac{1}{m(x)}\Big(\sum_{(x,y)\in E_{-}}b(x,y)d(x,y)-\sum_{(x,y)\in E_{+}}b(x,y)d(x,y)\Big)
\end{align*}
Let us give an example for a choice of $E_{\pm}$.

\begin{eg}\label{e:spheres} Let $b$ take values in $\{0,1\}$, $m$ be the vertex degree function $n$, and $d$ be the natural graph metric. For some fixed vertex $x_{0}\in X$, let $S_{r}$ be the spheres with respect to $d$ around $x_0$ and $|x|=r$ for $x\in S_{r}$. We choose $E_{\pm}$ such that outward (inward) oriented edges are in $E_+ $ $(E_-)$, i.e., $(x,y)\in E_{+}$, $(y,x)\in E_{-}$ if $x\in S_{r-1}$, $y\in S_{r}$ for some $r$ and $x\sim y$. Then $K(x)=(n_{-}(x)-n_{+}(x))/n(x)$, where
$n_{\pm}(x)=\#\{y\in S_{|x|\pm1}\mid y\sim x\}$ and $\#A$ denotes the cardinality of  $A$.
\end{eg}

The following theorem is an analogue to \cite[Lemma~1.15]{DKa} and
\cite[Proposition~3.3]{DL} which was also used in \cite{Woj1,Woj2}
to estimate the bottom of the essential spectrum.\medskip

\begin{thm}\label{t:lowerbound}{If $- K\ge k\ge0$, then $\al\ge k$.}
\end{thm}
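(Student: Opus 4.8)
The plan is to prove the pointwise inequality $|\partial W|\ge k\,m(W)$ for every finite $W\subseteq X$, from which $\al\ge k$ follows immediately upon dividing by $m(W)$ and taking the infimum. The curvature condition $-K\ge k\ge0$ unfolds to
\begin{align*}
\sum_{(x,y)\in E_{+}}b(x,y)d(x,y)-\sum_{(x,y)\in E_{-}}b(x,y)d(x,y)\ge k\,m(x)\quad\mbox{for all }x\in X,
\end{align*}
so summing this over $x\in W$ gives $k\,m(W)$ on the right. The task is then to show that the left-hand side, summed over $W$, is bounded above by $|\partial W|=\sum_{(x,y)\in\partial W}b(x,y)d(x,y)$.

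First I would expand $\sum_{x\in W}\big(\sum_{(x,y)\in E_{+}}-\sum_{(x,y)\in E_{-}}\big)b(x,y)d(x,y)$ and split each oriented edge according to whether its other endpoint $y$ lies inside $W$ or outside. The key observation is the antisymmetry of the orientation: if $(x,y)\in E_{+}$ then $(y,x)\in E_{-}$, and $b$, $d$ are symmetric. Therefore any edge with both endpoints in $W$ contributes to the sum once with a $+$ sign (from the $E_{+}$ end) and once with a $-$ sign (from the $E_{-}$ end), and these cancel. What survives is exactly the contribution of edges crossing the boundary, i.e. edges $(x,y)$ with $x\in W$ and $y\notin W$.

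The main obstacle, and the step requiring care, is the sign bookkeeping on the boundary edges: a crossing edge contributes $+b(x,y)d(x,y)$ if it is oriented outward ($(x,y)\in E_{+}$) and $-b(x,y)d(x,y)$ if oriented inward ($(x,y)\in E_{-}$), while edges carrying no orientation at all contribute nothing. Hence
\begin{align*}
\sum_{x\in W}\Big(\sum_{(x,y)\in E_{+}}-\sum_{(x,y)\in E_{-}}\Big)b(x,y)d(x,y)
\le\sum_{\substack{(x,y)\in\partial W}}b(x,y)d(x,y)=|\partial W|,
\end{align*}
since dropping the non-positive inward and unoriented boundary terms only increases the right-hand side, and each outward boundary edge is bounded by its full contribution $b(x,y)d(x,y)$. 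Combining this with the lower bound $k\,m(W)$ from the curvature hypothesis yields $k\,m(W)\le|\partial W|$, and dividing by $m(W)$ and taking the infimum over finite $W$ completes the proof.
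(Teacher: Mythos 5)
Your proof is correct and is essentially the paper's argument: you sum the curvature inequality over the finite set $W$, use the antisymmetry of the orientation (together with the symmetry of $b$ and $d$) to cancel the contributions of edges interior to $W$, and bound the surviving crossing-edge terms by $|\partial W|$. The paper packages the same cancellation algebraically via the signed weight $\si(x,y)=\pm d(x,y)$ and the identity $\sum_{x}1_{W}(x)\sum_{y}b(x,y)\si(x,y)=\frac{1}{2}\sum_{x,y}b(x,y)\si(x,y)(1_{W}(x)-1_{W}(y))$, while you do the edge bookkeeping directly; the rearrangement you perform is justified by $\sum_{y}b(x,y)d(x,y)<\infty$, which the paper notes explicitly.
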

\begin{proof}Let $W$  be a finite set and denote by $1_{W}$ the corresponding characteristic function. Furthermore, let ${\si}(x,y)=\pm d(x,y)$ for $(x,y)\in E_{\pm}$ and zero otherwise. We calculate directly
\begin{align*}
k&m(W)\leq -\sum_{x\in W}K(x)m(x)=-\sum_{x\in X}1_{W}(x)\sum_{y\in X}b(x,y)\si(x,y)\\
&=\frac{1}{2}\Big(\sum_{x\in X}1_{W}(x)\sum_{y\in X}b(x,y)\si(x,y)-\sum_{y\in X}1_{W}(y)\sum_{x\in X} b(x,y)\si(x,y)\Big)\\
&\leq \frac{1}{2}\sum_{x,y\in
X}b(x,y)d(x,y)|1_{W}(x)-1_{W}(y)|=|\partial W|,
\end{align*}
where we used $\sum_{y} b(x,y)d(x,y)<\infty$ and the antisymmetry of $\si$ in the second step. This
finishes the proof.
\end{proof}

\subsection{Example of antitrees}

In the final subsection we give an example of an antitree for which
Theorem \ref{t:lowerbound} together with Theorem \ref{t:cheeger} yields a  better estimate than the estimates known before. Recently, antitrees received some attention as they provide examples of graphs of polynomial volume growth (with respect to the natural graph metric) that are stochastically incomplete and have a spectral gap, see \cite{BK,GHM,HKW,Hu2,KLW,Woj3}.

For a given graph $b:X\times X\to\{0,1\}$ with root $x_{0}\in X$
and measure $m\equiv 1$, let $S_{r}$ be the vertices that have
natural graph distance $r$ to $x_{0}$ as above. We call a graph an
\emph{antitree} if every vertex in $S_{r}$ is connected to all
vertices in $S_{r+1}\cup S_{r-1}$ and to none in $S_{r}$.

In \cite{HKW}, it is shown that $\lm_{0}(L)=0$ whenever
$\lim_{r\to\infty}\log\#S_{r}/\log r <2$.
 It remains open by this result what happens in the case of an antitree with $\#S_{r-1}=r^{2}$. The classical Cheeger constant  $\al_{\mathrm{classical}}=\al_{1,n}$ for the normalized
Laplacian with the natural graph metric
which is given as the infimum over $\# \partial
W/n(W)$ (with $W\subseteq X$ finite) is zero. This can be
easily checked by choosing distance balls
$B_{r}=\bigcup_{j=0}^{r}S_{j}$ as test sets $W$. Hence, the estimate $\lambda_0(L)\ge (1-\sqrt{1-\al^2})\inf_{x\in X}n(x)$ with $\al = \al_{1,n}$ found in \cite{K} is trivial.

Likewise, the estimates presented in \cite{KLW} and \cite{Woj2}, which uses an unweighted  curvature, also give zero as a lower bound for the bottom of the spectrum in this case.

With Theorem \ref{t:lowerbound} we obtain a positive estimate for the Cheeger constant $\al=\al_{d,1}$ with the path metric $d(x,y)=(n(x)\vee n(y))^{-\frac{1}{2}}$, $x\sim y$ for the antitree with $\#S_{r-1}=r^{2}$. We  pick $E_{\pm}$ as in Example~\ref{e:spheres} above and obtain a positive lower bound for $-K$.  In particular, Theorem~\ref{t:lowerbound} shows that $\al>0$ and, thus, $\lm_{0}(L)>0$ by Theorem~\ref{t:cheeger} for the antitree satisfying $\#S_{r-1}=r^{2}$.
\medskip

\scriptsize{\textbf{Acknowledgements.}
F.B. thanks J\"{u}rgen Jost for introducing him to the topic and for many stimulating discussions during the last years.
The research leading to these results has received funding from the
European Research Council under the European Union's Seventh Framework
Programme (FP7/2007-2013) / ERC grant agreement n$^\circ$ 267087.

M.K. thanks Daniel Lenz for sharing
generously his knowledge about intrinsic metrics and he enjoyed
vivid discussions with Markus Seidel, Fabian Schwarzenberger and
Martin Tautenhahn.  M.K. also gratefully  acknowledges the
financial support from the German Research Foundation (DFG).

R.K. thanks J{\'o}zef Dodziuk for introducing him to Cheeger constants
years ago and for numerous inspiring discussions since then.
His research is partially financed by PSC-CUNY research grants, years 42 and 43.
}

\end{document}